\newtheorem{theorem}{Theorem}[section]
\newtheorem*{thm0}{Theorem}
\newtheorem{lemma}[theorem]{Lemma}
\newtheorem{proof-ref}{Proof(Proposition{P:5t2})}
\def\rrr{\mathbb{R}}
\def\ccc{\mathbb{C}}
\def\zzz{\mathbb{Z}}
\def\hh{\mathbb{H}}
\DeclareMathOperator{\Fix}{Fix}
\def\bdm{\begin{displaymath}}
\def\edm{\end{displaymath}}
\def\beq{\begin{equation}}
\def\eeq{\end{equation}}
\def\bes{\begin{equation*}}
\def\ees{\end{equation*}}
\def\epcm{\end{picture}\end{center}\end{minipage}}
\def\bpcm{\begin{minipage}{80pt}\begin{center}\begin{picture}}
\def\t2{T^2}
\def\f4{F_4}
\def\g2{G_2}
\def\p2{\frac{\pi}{2}}
\def\dist{\textrm{dist}}
\def\Fix{\textrm{Fix}}
\def\txt{\textrm}
\def\dim{\textrm{dim}}
\theoremstyle{definition}
\newtheorem{example}[theorem]{Example}
\begin{document}

%-------------------------------------------
%        Title
%-------------------------------------------

\title[Positively curved manifolds with maximal symmetry rank]{Positively curved manifolds with maximal symmetry-rank}

\author[Karsten Grove]{Karsten Grove$^{*}$}
\address{University of Maryland\\
 College Park, MD 20742}
\email{kng@math.umd.edu}
\author[Catherine Searle]{Catherine Searle}
\address{Centro de Investigaci\'on y Estudios Avanzados del Instituto Politecnico \\
M\'exico D.F., M\'exico}
\email{csearle@matcuer.unam.mx}

\thanks{$^*$ Supported in part by an NSF grant.}

\subjclass[2000]{53C20, 57S25, 51M25}

%\date{\today}

%-------------------------------------------
%        Abstract
%-------------------------------------------

\begin{abstract}

The symmetry-rank of a riemannian manifold is by definition the rank of its isometry group. We
determine precisely which smooth closed manifolds admit a positively curved metric with maximal
symmetry-rank.

\end{abstract}
\maketitle

\section*{Introduction}
Due to obvious ambiguities there are many interesting aspects to the following
general type problem: {\it Classify positively curved riemannian manifolds with large
isometry groups}. One example of this is of course the well-known classification of
homogeneous manifolds with positive (sectional) curvature (cf. \cite{AW, BB, B, W}. In this
paper, ``large" refers to the rank of the isometry group. Specifically, we define the
symmetry rank of a riemannian manifold $M$ to be

\bdm
\textrm{symrank}(M)=\txt{rank } \textrm{ Iso}(M),
\edm
where $\txt{Iso}(M)$ is the isometry group of $M$. The following solution to the above
problem in this context was inspired by the work of Allday and Halperin in 
\cite{AH} and of
Hsiang and Kleiner in \cite{HK}.

\begin{thm0}
Let $M$ be an n-dimensional, closed, connected riemannian manifold with
positive sectional curvature. Then
\begin{enumerate}
\item [(i)] $\txt{symrank}(M) \leq \lfloor (n + 1)/2 \rfloor$.
\end{enumerate}
Moreover,
\begin{enumerate}
\item [(ii)] Equality holds in $(i)$ only if $M$ is diffeomorphic to either a sphere, a real or complex
projective space, or a lens space.
\end{enumerate}
\end{thm0}

Note that any of the manifolds listed in (ii) above with their standard metrics have
maximal symmetry rank (cf. Example \ref{e:5}).
The main techniques used in the proof of this result are convexity and critical point
theory (cf. \cite{Gr}) applied to the positively curved, singular, orbit spaces. For other
contributions to the general problem we refer to \cite{GS, S1}  and \cite{S2}.
It is our pleasure to thank Stephan Stolz for suggestions leading to diffeomorphism
classification rather than topological classification.

\section{Circle actions with maximal fixed point set}\label{s:1}

In this section we will show how the presence of an effective isometric circle action
with large fixed-point set determines the type of the manifold. First let us recall, that
the fixed-point set $\Fix(S^{1})$ of an isometric $S^{1}$-action is a union of totally geodesic
submanifolds of even codimension. Here we let  dim $\Fix(S^{1})$ denote the largest
dimension of components of $\Fix(S^{1})$. Note also, that the above statements hold as well
for $\Fix(\zzz_{q})$, where $\zzz_{q}$, is any subgroup of $S^{1}$.
From now on we confine our attention to positively curved manifolds $M$. The key
observation is already contained in the following.

\begin{lemma}\label{l:1}
 Suppose codim $\Fix(S^{1}) = 2$. Then:
 \begin{enumerate}
\item [(i)] Exactly one component, $N$, of $\Fix(S^{1})$ has codimension $2$.
\item [(ii)] There is a unique orbit $S^1 p_{0}$ at maximal distance from $N$.
\item [(iii)] $S^{1}$ acts freely on $M - (N \cup S^1  p_{0})$.
\end{enumerate}
 \end{lemma}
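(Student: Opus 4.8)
The plan is to analyze the distance function $\rho = \dist(\cdot, N)$ to a codimension-$2$ component $N$ of $\Fix(S^1)$, exploiting positive curvature through Toponogov comparison and the critical point theory for distance functions; since $\rho$ is $S^1$-invariant this is the orbit-space analysis referred to in the introduction. Throughout I assume $n \geq 3$ (for $n = 2$ the statement fails, e.g.\ for a rotation of $S^2$, and that configuration does not occur in the intended application).

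First I record the local structure along $N$. At $p \in N$ the slice representation of $S^1$ on the $2$-dimensional normal space $\nu_p N$ is a rotation of some weight $k$; if $|k| \neq 1$ then $\zzz_k \subset S^1$ acts trivially on a tubular neighborhood of $N$, hence — the component of $\Fix(\zzz_k)$ through $N$ being open and closed — on all of $M$, contradicting effectiveness. So the slice representation along $N$ is the standard rotation, $S^1$ acts freely on a deleted tubular neighborhood of $N$, and in particular $\rho$ has no critical points there.

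Now set $r = \max\rho$ and $C = \rho^{-1}(r)$, an $S^1$-invariant set. By the critical point theory for distance functions to totally geodesic submanifolds in positive curvature, $\rho$ has no generalized critical points on $M \setminus (N \cup C)$. To prove (ii) I show $C$ is a single orbit. Since $n \geq 3$, $N$ has positive dimension, and a standard index-comparison (second-variation) argument along a geodesic issuing orthogonally into $N$ from a point of $C$ gives $r \leq \pi/(2\sqrt{\delta})$, where $\curv \geq \delta > 0$. Fix $x, y \in C$ and a minimal geodesic $\sigma$ from $x$ to $y$; as $x$ maximizes $\rho$ there is a minimal geodesic $\gamma$ from $x$ to $N$, meeting $N$ orthogonally at a point $z$, with $\angle(\sigma'(0), \gamma'(0)) \leq \pi/2$. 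Applying the Toponogov hinge estimate to $(\gamma,\sigma)$ and using $\dist(z,y) \geq \dist(y,N) = r = \dist(x,N)$ yields $\cos(\sqrt{\delta}\,r)\,(1 - \cos(\sqrt{\delta}\,\dist(x,y))) \leq 0$, which forces $x = y$ as soon as $r < \pi/(2\sqrt{\delta})$. The borderline case $r = \pi/(2\sqrt{\delta})$ is settled by the equality discussion in these comparison estimates, which exhibits $C$ as a totally geodesic submanifold on which $S^1$ acts transitively. Either way $C = S^1 p_0$ for a single orbit, which is (ii).

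Finally, (i) and (iii) follow once one knows that \emph{every point of $M \setminus N$ with non-trivial isotropy is a critical point of $\rho$}. Granting this, such points lie in $C = S^1 p_0$ by the previous paragraph, so $S^1$ acts freely on $M \setminus (N \cup S^1 p_0)$, which is (iii); and a second codimension-$2$ component $N'$ of $\Fix(S^1)$ would then be contained in the single orbit $S^1 p_0$, hence be a point, which is impossible for $n \geq 3$, giving (i). For the claim, let $x \in M \setminus N$ have isotropy $H \neq 1$: the set of initial directions at $x$ of minimal geodesics to $N$ is $H$-invariant, hence so is the set $W \subset U_x M$ of escape directions (an intersection of open hemispheres, so open and geodesically convex of diameter $< \pi$); were $W$ nonempty, the circumcenter of an $H$-orbit in $W$ would be an $H$-fixed escape direction, necessarily tangent to the proper totally geodesic submanifold $\Fix(H) \supseteq N$, and playing this direction against the $H$-invariant configuration of minimal geodesics into $N$ — together with effectiveness, which as above forbids any $\zzz_q \subseteq H$ from acting trivially near $x$ — produces a contradiction; hence $W = \emptyset$ and $x$ is critical. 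The case $n = 3$ (where $N$ is a closed geodesic) is handled directly via the classification of positively curved $3$-manifolds. The main obstacle is exactly this last claim, along with the rigidity analysis in the equality case $r = \pi/(2\sqrt{\delta})$: this is where the interplay between the isotropy subgroups $\zzz_q$ (each $\Fix(\zzz_q)$ a totally geodesic submanifold through $N$) and the convexity of $\rho$ has to be used in earnest.
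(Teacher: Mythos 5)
Your overall strategy---analyzing $\rho=\dist(\cdot,N)$ via criticality and averaging isotropy-invariant sets of directions---is in the right spirit, but both load-bearing steps are broken, and for the same underlying reason: you insist on working in $M$, whereas the facts you need only hold in the orbit space $X=M/S^{1}$. For (ii), your hinge computation has the wrong sign: from $\angle(\sigma'(0),\gamma'(0))\le\pi/2$ and $\dist(z,y)\ge r$, Toponogov gives $\cos(\sqrt{\delta}\,r)\bigl(1-\cos(\sqrt{\delta}\,\dist(x,y))\bigr)\ge 0$, which is vacuous, not $\le 0$. Worse, the conclusion you are trying to force, $x=y$ for all $x,y\in C$, is false: for an invariant positively curved metric on $S^{3}$ with $\Fix(S^{1})$ a circle, $C$ is the dual circle \emph{orbit}, and there is no reason such a metric sits exactly at the borderline $r=\pi/(2\sqrt{\delta})$, so your dichotomy ``single point or rigid case'' cannot be correct. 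The paper gets uniqueness by passing to $X=M/S^{1}$, a positively curved Alexandrov space whose totally geodesic boundary is $\bar{N}$, where $\dist(\bar{N},\cdot)$ is \emph{strictly concave}; this yields a unique soul point $\bar{p}_{0}$, whose preimage is the single orbit $S^{1}p_{0}$. Your companion assertion that $\rho$ has no generalized critical points on $M\setminus(N\cup C)$ is likewise not a standard fact in $M$ (the superlevel sets of $\rho$ are not convex there --- think of the solid tori around a great circle in $S^{3}$); it, too, is the lift of the orbit-space concavity, which you never invoke.

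For (iii), your key claim is the right one, but the proof stops where the work begins: you produce an $H$-fixed escape direction and then announce a contradiction without deriving one --- an $H$-fixed direction pointing \emph{away} from $N$ is not by itself contradictory. The paper's argument runs the other way: the angle condition from convexity of the superlevel sets in $X$ confines the directions to $N$ at $p$ to an open half-sphere, so their circumcenter is an $S^{1}_{p}$-fixed direction pointing \emph{toward} $N$; this direction is tangent to the totally geodesic $\Fix(S^{1}_{p})$, so one can decrease $\dist(N,\cdot)$ while staying inside $\Fix(S^{1}_{p})$, and applying the same argument at the point of the relevant component closest to $N$ contradicts the triviality of isotropy near $N$ (the weight-one slice representation, which you did establish correctly). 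Finally, note that (i) in the paper is an independent one-line Frankel/Synge argument --- two totally geodesic submanifolds with $\dim N_{1}+\dim N_{2}\ge\dim M$ must intersect, while distinct fixed-point components are disjoint --- so it does not need to be routed through (ii) and (iii) at all, and your fallback to the classification of positively curved $3$-manifolds is far heavier machinery than the lemma requires.
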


 \begin{proof}
Since any two totally geodesic submanifolds $N_{1}$ and $N_{2}$ with 
 $\dim \  N_{1} +
\dim \ N_{2} \geq  \dim \ M$ intersect by a Synge type argument, part (i) is clear.

Now consider the orbit space $X = M/S^{1}$. This is a positively curved space (in
Aleksandrov distance comparison sense, cf. \cite{BGP}), with totally geodesic boundary
$\partial X = N$. As in riemannian geometry (cf. \cite{CG}), $\dist(N,-): M/S^{1} \rightarrow \rrr$ is a strictly
concave function (for the case of general Aleksandrov spaces see \cite{P}). In particular,
there is a unique point $\bar{p}_{0} \in M/S$ (the soul) at maximal distance from $N$. This
proves (ii).

To prove (iii), consider a point $\bar{p} \in X - (N \cup \bar{p}_{0})$. Since $\bar{p}$ is in the boundary of the
convex set $\{ \bar{x} \in X | \txt{  } \dist(N, \bar{x}) \geq \dist(N, \bar{p})\}$, every segment (minimal geodesic) from
$\bar{p}$ to $N$ makes an angle $> \pi/2$ to every segment from $\bar{p}$ to $\bar{p}_{0}$. If $p \in M$ and $S^{1}p ={p}$, the
same statement holds for horizontal lifts of segments to $p$. Clearly, the isotropy $S^{1}_{p}$ at
$p$ will preserve each set of ``opposite" segments. Consequently, by the angle condition
above, $S^{1}_{p}$ will fix a direction pointing toward $N$ (i.e. making an angle $< \pi/2$ to all
segments from $p$ to $N$ and an angle $> \pi/2$ to all segments from $p$ to $S^{1} \cdot p_{0}$). The same
argument, however, also applies to a point $q \in \Fix(S^{1}_{p})$ closest to $N$. Since all points
sufficiently close to $N$ have trivial isotropy, this leads to a contradiction unless $S^{1}_{p}$ is
trivial. 
\end{proof}

The structure obtained in Lemma \ref{l:1} and its proof, provides the ingredients for
establishing our main recognition result.

\begin{theorem}\label{t:2}
 Let $M$ be a closed, connected riemannian manifold with positive curvature.
If $M$ admits an effective isometric $S^{1}$-action with $\textrm{codim}\ \Fix(S^{1}) = 2$, then $M$ is
diffeomorphic to either the unit sphere $S^{n}_{1}$, a space form $S^n_{1}/\zzz_{q}$, or $\ccc P^{m} = S^{n+1}_{1}/S^{1}$ (when
$n = 2m$).
\end{theorem}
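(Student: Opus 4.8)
The plan is to turn the picture from Lemma~\ref{l:1} into an explicit cut‑and‑paste description of $M$ and then match each possibility against a model, the real work being the smooth (rather than merely topological) identification. First the tube decomposition: set $r=\dist(N,S^{1}p_{0})$ and consider $F=\dist(N,\cdot)\colon M\to[0,r]$. Since $F$ lifts the strictly concave function $\dist(N,\cdot)$ on $X=M/S^{1}$, whose unique critical point is the soul $\bar p_{0}$ (Lemma~\ref{l:1}(ii)), $F$ has no critical points on $M\smallsetminus(N\cup S^{1}p_{0})$; critical point theory on the orbit space (cf.\ \cite{Gr}) then gives an $S^{1}$‑equivariant splitting $M=B_{1}\cup_{P}B_{2}$, where $B_{1}$ is the closed tube about $N$, $B_{2}$ the closed tube about $S^{1}p_{0}$, and $P=\partial B_{1}=\partial B_{2}$ a geodesic sphere on which $S^{1}$ acts \emph{freely} (Lemma~\ref{l:1}(iii)). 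Here $B_{1}$ is the disk bundle $D(\nu)$ of the normal bundle $\nu$ of $N$; as the $S^{1}$‑action near $N$ is effective it rotates the normal $2$‑planes with weight $\pm1$, so $\nu$ is canonically a complex line bundle on which $S^{1}$ acts by scalar multiplication. Writing $H=S^{1}_{p_{0}}$, we get $B_{2}=S^{1}\times_{H}D^{n-1}$ with $H$ linear on the slice $D^{n-1}$ and, by Lemma~\ref{l:1}(iii), free off the origin; thus $H=S^{1}$ or $H=\zzz_{q}$ is finite cyclic.

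Next one reads off $N$, $\nu$, $P$ and splits into cases. Taking $S^{1}$‑quotients, $B_{1}/S^{1}=D(\nu)/S^{1}\cong N\times[0,1]$ (fiberwise rotation collapses each $D^{2}$ to an interval), while $B_{2}/S^{1}=D^{n-1}/H$; comparing boundaries yields $N\cong P/S^{1}\cong S^{n-2}/H$ and $P=S(\nu)\cong S^{1}\times_{H}S^{n-2}$, so $\nu$ is the complex line bundle over $N=S^{n-2}/H$ associated, via $H\hookrightarrow S^{1}$, to the $H$‑bundle $S^{n-2}\to S^{n-2}/H$ and the $H$‑action on the slice. If $H=S^{1}$, then $p_{0}$ is a fixed point, $n=2k$, the slice is $\ccc^{k}$ with the scalar action, $P=S^{2k-1}$ with the Hopf action, $N=\ccc P^{k-1}$, $\nu$ is $\pm$ the tautological bundle, $B_{2}=D^{2k}$, and since $\ccc P^{k}=D(\nu)\cup_{S^{2k-1}}D^{2k}$ we get $M\cong\ccc P^{k}$. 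If $H$ is trivial, then $N=S^{n-2}$, $\nu$ is trivial, $B_{1}=D^{2}\times S^{n-2}$, $B_{2}=S^{1}\times D^{n-1}$; every $S^{1}$‑equivariant gluing of the two boundaries, being of the form $(\theta,x)\mapsto(\theta+h(x),g(x))$, can be absorbed into the equivariant self‑diffeomorphism $(w,x)\mapsto(e^{2\pi i h(x)}w,g(x))$ of the product $B_{1}$, so $M\cong(D^{2}\times S^{n-2})\cup(S^{1}\times D^{n-1})=\partial(D^{2}\times D^{n-1})=S^{n}_{1}$. If $H=\zzz_{q}$, $q\ge2$, then $N=S^{n-2}/\zzz_{q}$ is a lens space, and comparing the decomposition with that of $S^{n}_{1}/\zzz_{q}$ carrying the residual rotational $S^{1}$‑action (whose fixed set, exceptional orbit and tubes realise exactly $N$, $S^{1}p_{0}$, $B_{1}$, $B_{2}$) gives $M\cong S^{n}_{1}/\zzz_{q}$; this includes $\rrr P^{n}=S^{n}_{1}/\zzz_{2}$, and for $n=3$ the Heegaard‑genus‑one gluings are handled directly, positive curvature excluding $S^{2}\times S^{1}$ via Bonnet--Myers.

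The substantive difficulty is not the geometry but the passage from this cut‑and‑paste picture to an actual \emph{diffeomorphism}. In the sphere case one of the tubes is an honest product, which kills any equivariant regluing; but in the $\ccc P^{k}$ and lens‑space cases both tubes are nontrivial bundles, and the diffeomorphism type of $B_{1}\cup_{P}B_{2}$ a priori depends on the isotopy class of the equivariant regluing of $P$ over $N$ — data living in $H^{1}(N;\zzz)$ and in $\pi_{0}$ of the relevant equivariant diffeomorphism groups, where exotic phenomena can intervene. Topologically this is soft: Mayer--Vietoris and van Kampen already pin $M$ down up to homeomorphism (a homotopy sphere, or a cohomology $\ccc P^{k}$, or a space form, with the correct fundamental group). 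Upgrading to the smooth classification is precisely where one invokes Stolz's observation, acknowledged in the introduction, that for these equivariant doubles of disk bundles the smooth type is forced to be the standard one. I expect that smoothing‑theoretic step to be the crux of the argument.
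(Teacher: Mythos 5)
Your first two paragraphs reproduce the paper's argument almost exactly: the same gradient-like flow splits $M$ into the normal disk bundle $D_{\varepsilon}(N)$ and the tube $D_{\varepsilon}(S^{1}p_{0})\cong S^{1}\times_{H}D^{n-1}$ with $H=S^{1}_{p_{0}}$, and your three cases $H=\{1\}$, $H=S^{1}$, $H=\zzz_{q}$, with their model identifications (trivial bundle giving $S^{n}=\partial(D^{2}\times D^{n-1})$, Hopf bundle giving $\ccc P^{m}$, and comparison with the quotient model or the $q$-fold cover giving the space forms), are precisely Cases 1--3 of the paper. Where you diverge is your final paragraph: the paper invokes no smoothing-theoretic theorem of Stolz, and the ``crux'' you anticipate is not part of its proof. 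The acknowledgement credits Stolz only with the suggestion to aim for a diffeomorphism rather than a topological classification; the mechanism by which the paper gets it is the one already implicit in your second paragraph, namely that the two tubes are not abstract pieces glued by an unknown equivariant diffeomorphism but subsets of $M$ sharing their boundary, along which the flow identifies $\partial D_{\varepsilon}(N)\to N$ simultaneously with the normal sphere-bundle projection and with the orbit map of the free $S^{1}$-action on $\partial D^{\perp}_{\varepsilon}(p_{0})/S^{1}_{p_{0}}$, i.e.\ with the linear model in the slice at $p_{0}$ (the Hopf map onto $\ccc P^{m-1}$, respectively the covering projection onto $S^{n-2}/\zzz_{q}$); hence $M$ is the standard double disk bundle in each case. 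You are right that the isotopy class of the boundary identification is the delicate point in any cut-and-paste argument of this kind, and the paper's treatment of it is terse rather than elaborate; but deferring the $\ccc P^{m}$ and lens-space cases to an unspecified external smoothing result is not what the paper does, and as written that deferral leaves your own proof of those two cases incomplete where the paper's is not.
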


\begin{proof}
Let $p_{0}, \bar{p_{0}}$ and $N$ be as in Lemma \ref{l:1}. When $N$ is viewed as a subset of $M/S^{1}$,
we denote it by $\bar{N}$. From the angle condition obtained in Lemma \ref{l:1} it follows that
$\dist(\bar{N}, -)$ and $\dist(\bar{p_{0}}, -)$ (resp. $\dist(N, -)$ and $\dist(S^{1} \cdot p_{0}, -))$ have no critical points in
$M/S^{1} - (\bar{N} \cup \bar{p_{0}}$) (resp. $M - (N \cup S^{1} \cdot p_{0}))$. Choose gradient-like vector fields, $\bar{V}$ and
$V$ on $M/S^{1}$ and $M$ respectively, so that $V$ is a horizontal lift of $\bar{V}$ on
$M - (N \cup S^{1} \cdot p_{0}))$ which is radial near $N$ and $S^{1} \cdot p_{o}$ (cf. e.g. \cite{Gr}). This shows in
particular that $M$ is the union of tubular neighborhoods $D_{\varepsilon}(N)$ and $D_{\varepsilon}(S^{1} \cdot p_{0})$
of $N$ and $S^{1} \cdot p_{0}$, respectively. Moreover, $N \cong \bar{N}$ is diffeomorphic to
$\partial D _{\varepsilon} (\bar{p_{0}}) \cong  \partial D _{\varepsilon} (S^{1} p_{0})/ S^{1} \simeq \partial D^{ \bot} _{\varepsilon} (p_{0})/  S^{1} _{p_{0}}$, where $D^{ \bot} _{\varepsilon} (p_{0})$
 denotes the $\varepsilon$-normal disc at
$p_{0}$ to $S^{1} \cdot p_{0}$. In view of this, it is not surprising that $M$ is determined by the isotropy
group $S^{1}_{p_{0}} $ at  $p_{0}$.

Case 1: $S^{1}_{p_{0}} = \{1\}$. Clearly $(M/S^{1}, \bar{N})$ is diffeomorphic to $(D^{n-1}, S^{n-2})$ and the
quotient map $\pi: M  -  N \rightarrow M/S^{1}  -  \bar{N} \simeq D^{n-1}  -  S^{n-2}$ is a (trivial) principal
circle bundle (cf. Lemma \ref{l:1} (iii)).  Moreover, when restricted to $\partial D_{\varepsilon} (\bar{N}) \subset M$,
$\pi: \partial D_{\varepsilon} (N) \rightarrow  \partial D_{\varepsilon} (\bar{N}) \simeq \bar{N} \cong N(\simeq S^{n-2}) $ is nothing but the normal projection in $M$.
In particular, $D_{\varepsilon}(N) \simeq \partial D_{\varepsilon}(N) \times_{S^{1}} D^{2} $ and 
 hence $M$ is diffeomorphic to $(M -  \txt{int} \ D_{\varepsilon}(N)) \cup   \partial D_{\varepsilon}(N)\times_{S^{1}} D^{2} \cong D^{n-1} \times S^{1} \cup S^{n-2} \times D^{2} \simeq S^{n}.$
 
Case 2: $S^{1}_{p_{0}} = S^{1}$. Note by Lemma \ref{l:1} (iii) that $S^{1}_{p_{0}}$ acts freely on the unit sphere
$S^{n-1} \simeq  \partial D_{\varepsilon}(p_{0})$ at $p_{0}$. In particular, $n = 2m$ and $N \cong \bar{N} \simeq \ccc P^{m-1}$. As in the case
above, $ \pi:  \partial D_{\varepsilon} (N) \rightarrow  \partial D_{\varepsilon} (\bar{N}) \simeq \bar{N} \cong N(\simeq \ccc P^{m-1})$ is the normal projection in $M$ and
$ D_{\varepsilon}(N)$ is diffeomorphic to  $\partial D_{\varepsilon} (N) \times_{S^{1}} D^{2}$. Since $M  -  \txt{int} \ D_{\varepsilon} (N) \simeq D_{\varepsilon}(p_{0}) \simeq D^{2m}$ and
$S^{2m-1} \simeq  \partial D_{\varepsilon} (N) \rightarrow N \simeq \ccc P^{m-1}$ is the Hopf map we conclude that $M$ is diffeomorphic
to $D^{2m} \cup S^{2m-1} \times_{S^{1}} D^{2} \simeq \ccc P^{m}$.

Case 3: $S^{1}_{p_{0}}= \zzz_{q}$. First observe that $S^{1}_{p_{0}}$ acts freely on the unit normal sphere $S^{n-2} \simeq \partial D^{ \bot} _{\varepsilon}(p_{0})$, and consequently $N \cong \bar{N} \simeq S^{n-2}/ \zzz_{q}$. By proceeding as above or
considering the $q$-fold universal cover $\tilde{M} \simeq S^{n} $
  we find that $M$ is diffeomorphic to $\rrr P^{n}$
($q = 2$) or to a lens space $L_{q} \simeq  S^{n}/ \zzz_{q} $ ($q > 
  2$ and $n$ odd). 
\end{proof}  

The main issue in Lemma \ref{l:1} and Theorem \ref{t:2} is that if codim $\Fix(S^{1}) = 2$, then $S^{1}$
acts transitively on the fibers of the unit normal bundle of the maximal component
$N \subset \Fix(S^{1})$. A generalization for a general group $G$ with this property will be explored
in \cite{GS}. The corresponding recognition is coarser and contains more topological types.

\section{Maximal torus actions}\label{s:2}

The purpose of this section is to show that any sufficiently large effective and
isometric torus action on a positively curved manifold contains a circle action with
large fixed-point set. This together with the results obtained in Section \ref{s:1} will establish
the Main Theorem in the Introduction.

We consider even and odd dimensions separately.

\begin{theorem}\label{t:3}
 Let $M$ be a closed, connected and positively curved riemannian manifold
of dimension $2n$. Then any effective and isometric $T^{k}$-action has $k \leq n$. If $k = n$,
$T^{n}$ contains an $S^{1}$ with codim $\Fix(S^{1}) = 2$.
\end{theorem}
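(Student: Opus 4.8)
The plan is to argue by induction on $n$, using the isotropy representation at a fixed point to reduce the dimension. First I would show that $T^k$ must have a fixed point. Since $M^{2n}$ is even-dimensional, positively curved, and $T^k$ is abelian, a standard argument (Berger's theorem for even-dimensional positively curved manifolds applied to a generic circle in $T^k$, or equivalently looking at the Euler characteristic via the Lefschetz fixed point theorem) shows that any isometry in the identity component has a fixed point; intersecting the fixed-point sets of a generating set of circles, or better, using that $\Fix(S^1_1)\supset\Fix(S^1_1\cdot S^1_2)\supset\cdots$ is a nested chain of totally geodesic positively curved submanifolds of even dimension, produces a point $p$ with $T^k p = p$.

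Next I would analyze the isotropy representation of $T^k$ on $T_pM\cong\rrr^{2n}$. Since the action is effective and $T^k$ is a torus acting linearly and orthogonally, $\rrr^{2n}$ splits as a sum of at most $n$ non-trivial complex weight-line summands together with a trivial summand; effectiveness forces the weights to span the dual of $\mathrm{Lie}(T^k)$, hence $k\leq n$, giving the first assertion. For the second assertion, suppose $k=n$. Then the isotropy representation at $p$ is a sum of exactly $n$ non-trivial complex weight lines with linearly independent weights $\alpha_1,\dots,\alpha_n$, so it is (up to finite index / lattice change) the standard $T^n$-action on $\ccc^n$. Pick the circle $S^1\subset T^n$ whose Lie algebra is the common kernel of $\alpha_2,\dots,\alpha_n$; then $S^1$ acts on $T_pM$ with a single non-trivial weight line and an $\rrr^{2n-2}$ fixed subspace, so the component $N$ of $\Fix(S^1)$ through $p$ has codimension $2$ in $M$.

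The remaining point — and I expect this to be the main obstacle — is to rule out the possibility that some \emph{other} component of $\Fix(S^1)$ has larger dimension, i.e. to confirm $\operatorname{codim}\Fix(S^1)=2$ rather than merely $\operatorname{codim} N=2$ for the component $N\ni p$. Here I would invoke the Synge-type intersection argument already used in Lemma \ref{l:1}(i): two totally geodesic submanifolds of a positively curved manifold with dimensions summing to at least $\dim M$ must intersect, so there cannot be two distinct components of $\Fix(S^1)$ each of codimension $2$ (their dimensions sum to $2n-4+\text{something}$... ), and more care is needed. The cleaner route is to set up the induction properly: apply the inductive hypothesis to the fixed-point component $N'$ of the subtorus $S^1$ of smallest possible codimension, using that $N'$ is closed, connected, positively curved, totally geodesic, carries an effective isometric action of $T^n/S^1\cong T^{n-1}$ (after dividing by the possibly non-effective kernel), and has $\dim N' = 2n - 2j$ for some $j\geq 1$; if $\dim N' < 2n-2$ we would get $n-1 \le (\dim N')/2 < n-1$, a contradiction, forcing $\operatorname{codim} N' = 2$. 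I would also need the elementary fact that if $S^1$ fixes $N'$ with $\operatorname{codim} N' = 2$ then no other fixed component can have codimension $2$, which is exactly Lemma \ref{l:1}(i). Assembling these pieces: effectiveness $\Rightarrow k\le n$; $k=n$ $\Rightarrow$ a subcircle fixes a codimension-$\le 2$ component $\Rightarrow$ (positive curvature forbids codimension $0$) a subcircle with $\operatorname{codim}\Fix(S^1)=2$, completing the induction and the proof.
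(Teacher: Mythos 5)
Your main argument is correct, but it follows a genuinely different route from the paper's. You first produce a common fixed point $p$ of the whole torus (the fixed--point theorem for Killing fields on even--dimensional positively curved manifolds, cf.\ \cite{K}, applied along a nested chain of totally geodesic fixed--point components) and then read everything off the faithful isotropy representation $T^{k}\hookrightarrow O(2n)$ at $p$: the bound $k\leq n$ comes from the rank of a maximal torus of $O(2n)$, and for $k=n$ the $n$ linearly independent weights let you choose a circle whose fixed subspace of $T_{p}M$ has codimension $2$. The paper instead argues by induction on $n$: it chooses a circle $S^{1}\subset T^{l+1}$ whose fixed--point set has \emph{minimal} codimension, applies the inductive hypothesis to the quotient torus acting on a maximal component $N$, and rules out $\textrm{codim}\, N>2$ by showing that the resulting subtorus $T^{m+1}$ fixing $N$ pointwise would have to act almost freely on a unit normal sphere $S^{2m+1}$, which forces $m=0$ (citing \cite{GG}). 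Your approach is shorter and more self-contained in even dimensions; the paper's inductive scaffolding has the advantage that it is reused almost verbatim in the odd--dimensional Theorem \ref{t:4}, where a common fixed point need not exist.

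Two comments on the second half of your write-up. First, your worry about ``some other component of $\Fix(S^{1})$ of larger dimension'' is vacuous: your chosen circle acts nontrivially at $p$, hence gives a nontrivial isometric circle action on the connected manifold $M$, so every component of its fixed--point set has positive even codimension, i.e.\ codimension at least $2$. With the paper's convention that $\dim \Fix(S^{1})$ is the largest dimension of a component, the codimension-$2$ component through $p$ already yields $\textrm{codim}\,\Fix(S^{1})=2$; no Synge-type argument is needed. Second, the alternative ``cleaner route'' you sketch at the end does have a gap as stated: the induced $T^{n-1}=T^{n}/S^{1}$-action on the minimal--codimension component $N'$ need not be effective, so you cannot directly conclude $n-1\leq (\dim N')/2$. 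The paper closes exactly this gap with the normal--sphere argument just described, and you would need that (or something equivalent) if you pursued the inductive version.
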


\begin{proof}

The proof is by induction on $n$. It is a simple and well-known fact that any
isometric $S^{1}$-action on an even-dimensional manifold with positive curvature has
non-empty fixed-point set (cf. e.g. \cite{K}). In dimension two, any such fixed point is of
course isolated. To complete the induction anchor, suppose $T^{2} = S^{1} \times S^{1}$ acts on $M^{2}$.
If $p$ is fixed by say the first circle factor, it must be fixed also by the second factor. The
induced $T^{2}$-action on the unit circle at $p$ must have $S^{1}$ isotropy. This $S^{1}$ will then act
trivially on $M^{2}$ since its fixed-point set has even codimension. Thus, $T^{2}$ cannot act
effectively on $M$.

Assume by induction that the claim has been established for $n \leq l$, and consider
a $T^{l+1}$-action on $M^{2(l+l)}$. Pick an $S^{1} \subset T^{l+1}$ with $\Fix(S^{1})$ of minimal codimension.
We claim that codim $\Fix(S^{1}) = 2$. If not, codim $N = 2(m + l)$, $m \geq 1$ for any component
$N \subset \Fix(S^{1})$. Now, clearly $T^{l} = T^{l+1}/S^{1}$ acts on $N^{2(l-m)}$. By the induction
hypothesis $T^{l} \times N \rightarrow N$ is ineffective, in fact some $T^{m} \subset T^{l}$ acts trivially on $N$. If
$\pi:T^{l+1} \rightarrow T^{l+1}/S^{1}$ is the quotient map, this implies that $T^{m+1} = \pi^{-1} (T^{m}) \subset T^{l+1}$
fixes all points of $N$. Since codim $N$ = codim $\Fix(S^{1})$ was assumed minimal, the
induced action by $T^{m+1}$ on any unit normal sphere, $S^{2m+1}$, to $N$ is almost free. This,
however, is possible only when $m = 0$ (cf. \cite{GG}).

To prove maximality suppose $T^{l+2}$ acts isometrically on $M^{ 2(l+1)}$. From the above,
some $S^{1} \subset T^{l+2}$ has a fixed-point component $N$ of dimension $2l$, and $T^{l+1} =
T^{l+2}/S^{1}$ acts on $N^{2l}$. By the induction hypothesis some $S^{1} \subset TÕ^{l+2}/S^{1}$ acts trivially
on $N$, and so does $T^{2} = \pi^{-1} (S^{1}) \subset T^{l+1}$. As before, this implies that some
$S^{1} \subset T^{2} \subset T^{l+2}$ acts trivially on $M$. 

\end{proof}

The main difference between even and odd dimensions is related to the existence,
respectively possible non-existence of fixed points for circle actions. Nonetheless, we
have the following analogue to Theorem \ref{t:3}.

\begin{theorem}\label{t:4}
Let $M$ be a closed, connected and positively curved riemannian manifold of
dimension $2n + 1$. Then any eflective and isometric $T^{k+1}$-action has $k \leq n$. If $k = n$,
$T^{n+1}$ contains an $S^{1}$ with codim $\Fix(S^{1}) = 2$.
\end{theorem}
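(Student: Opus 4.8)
The plan is to mimic the even-dimensional argument of Theorem~\ref{t:3}, running an induction on $n$ that simultaneously establishes the bound $k \le n$ and the existence, when $k = n$, of a circle subgroup whose fixed-point set has codimension $2$. The crucial structural difference is that on an odd-dimensional manifold an isometric circle action need not have fixed points, so the induction anchor and the ``descent to a fixed component'' step both need to be handled with more care than in the even case. For the anchor, one takes $M$ of dimension $1$ or $3$: on $S^1$ only the trivial torus acts effectively, and on a $3$-manifold a $T^2$-action, being effective and isometric in positive curvature, must have a circle subgroup with a fixed-point component of dimension $1$ — this is essentially the classification of isometric $T^2$-actions on positively curved $3$-manifolds (or follows from slice analysis: the principal isotropy is finite, and a singular orbit of dimension $\le 1$ forces a circle isotropy fixing a geodesic), giving codim $\Fix(S^1) = 2$ and $k = n = 1$.

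For the inductive step, assume the theorem holds for all odd dimensions up to $2l+1$, and let $T^{k+1}$ act effectively and isometrically on $M^{2l+3}$. The first task is to produce a circle subgroup with a fixed-point component of small codimension. Unlike the even case, I cannot simply invoke non-emptiness of fixed-point sets; instead I would pass to a circle $S^1 \subset T^{k+1}$ whose isotropy behavior is extremal — either it has a fixed-point component, or one works with a minimal orbit and its isotropy group. More precisely: choose a circle $S^1$ with $\Fix(S^1)$ of minimal codimension among all circles in $T^{k+1}$ having nonempty fixed-point set; such a circle exists because, by dimension parity, sufficiently generic circles in a large torus do have fixed points (the fixed-point set of the full torus $T^{k+1}$ is a totally geodesic submanifold on which $T^{k+1}$ acts trivially, and if it is empty one uses that some subcircle must fix the minimal-dimensional orbit through a point, by the usual slice argument and $\dim M$ odd). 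Let $N$ be such a component; then $T^k = T^{k+1}/S^1$ acts on $N$, whose dimension has the same parity as $\dim M$ is odd minus even codimension, hence $N$ is odd-dimensional and the inductive hypothesis applies to it.

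Now the argument parallels Theorem~\ref{t:3}: if $\operatorname{codim} \Fix(S^1) = 2(m+l)+2 > 2$ with $m \ge 1$, then $T^k$ acting on $N^{2(l-m)+1}$ is, by induction, ineffective, so some $T^m \subset T^k$ acts trivially on $N$; pulling back along $\pi : T^{k+1} \to T^{k+1}/S^1$ gives $T^{m+1}$ fixing all of $N$, and by minimality of $\operatorname{codim}\Fix(S^1)$ the induced $T^{m+1}$-action on each unit normal sphere $S^{2m+1}$ of $N$ is almost free, which by \cite{GG} forces $m = 0$ — a contradiction. Hence $\operatorname{codim}\Fix(S^1) = 2$, giving the desired circle when $k = n$. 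Maximality ($T^{l+2}$ cannot act effectively on $M^{2l+3}$) follows the same template: the codimension-$2$ component $N^{2l+1}$ carries an action of $T^{l+1} = T^{l+2}/S^1$, which by induction is ineffective on $N$, so some $S^1$ in $T^{l+1}$ acts trivially on $N$; its preimage $T^2 \subset T^{l+2}$ fixes $N$, and the almost-free action on normal spheres $S^1$ forces a circle in $T^2$ to act trivially on $M$, contradicting effectiveness. The main obstacle I anticipate is the first part of the inductive step — guaranteeing a circle subgroup with a nonempty fixed-point component of controlled codimension in the absence of the even-dimensional fixed-point theorem; I expect this to require a careful slice-theoretic argument showing that in a maximal torus the ``fixed-point-free'' circles cannot exhaust enough of the torus, using positivity of curvature (via \cite{GG}-type restrictions on almost-free torus actions on spheres) to rule out the alternative.
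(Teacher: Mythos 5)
Your outline reproduces the paper's inductive skeleton correctly once a circle with non-empty fixed-point set is in hand: minimizing the codimension of $\Fix(S^{1})$, applying the inductive hypothesis to the odd-dimensional component $N$, pulling back the trivially acting subtorus along $\pi: T^{k+1}\to T^{k+1}/S^{1}$, and invoking \cite{GG} to force $m=0$ is exactly the argument of Theorem \ref{t:3}, and your maximality step is also right. But the step you yourself flag as ``the main obstacle'' --- producing a circle subgroup with non-empty fixed-point set --- is a genuine gap, both in your dimension-$3$ anchor and in the inductive step, and the remedies you sketch do not close it. In the anchor you assert that an isometric $T^{2}$-action on a positively curved $M^{3}$ must have $S^{1}$-isotropy ``by the classification'' or by slice analysis at a singular orbit; this is circular, since the whole issue is to rule out a \emph{free} $T^{2}$-action, which has no singular orbits at all (compare the flat $T^{3}$). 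In the inductive step your proposed mechanism (``some subcircle must fix the minimal-dimensional orbit\dots by the usual slice argument and $\dim M$ odd'') is false as stated for a free action, where no subcircle fixes anything.

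The paper closes this gap with an ingredient you do not mention: O'Neill's formula. If every isotropy group is trivial the action is free; quotienting by any circle subgroup then yields a free isometric $T^{k}$-action on the even-dimensional manifold $M/S^{1}$, which is positively curved by O'Neill, contradicting Theorem \ref{t:3} (any isometric circle action on an even-dimensional positively curved manifold has fixed points, so no such free action exists). If instead some isotropy group is non-trivial but contains no circle, it is finite, and its fixed-point set is a totally geodesic, odd-dimensional, positively curved submanifold of lower dimension, invariant under the whole torus; the inductive hypothesis applied there forces some circle of $T^{k+1}$ to act trivially on it, i.e.\ to have non-empty fixed-point set. With this dichotomy in place the rest of your argument goes through, so the missing piece is precisely the reduction of the free case to the even-dimensional theorem via the Riemannian submersion $M\to M/S^{1}$.
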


\begin{proof}
First consider the case $n = 1$. We must show that any $T^{2}$ action on $M^{3}$ has
$S^{1}$-isotropy. Note that if some isotropy group $T^{2}_{p}$ is non-trivial, the whole torus $T^{2}$
will act on the individual components of $\Fix\txt{ } T^{2}_{p}$. As we have seen, this yields
$S^{1}$-isotropy. On the other hand, a free isometric $S^{1} \times S^{1}$-action on $M^{3}$ would induce
a free isometric $S^{1}$-action on the positively curved $2$-manifold, $M^{2} = M^{3}/S^{1}$ by
O'Neill's formula (cf. \cite{O'N} or \cite{G}). This is impossible by Theorem \ref{t:3}. Using this fact
as before, we see that any $T^{3}$ action on $M^{3}$ must be at least $S^{1}$-ineffective.

By induction assume that Theorem \ref{t:4} holds for $n \leq l$, and consider an isometric
action $T^{l+2} \times M^{2l+3} \rightarrow M^{2l+3}$. If some $S^{1} \subset T^{l+2}$ has non-empty fixed-point set we
proceed exactly as in the proof of Theorem \ref{t:3}. As in the induction anchor, this is
indeed the case unless $T^{l+2}$ acts freely on $M^{2l+3}$. Such an action would induce a free
$T^{l+1}$-action on a positively curved manifold $M^{2l+2}= M^{2l+3}/S^{1}$, which is impossible
by Theorem \ref{t:3}. The maximality statement is now proved exactly as in Theorem
\ref{t:3}. 
\end{proof}

We conclude by exhibiting maximal torus-actions on the possible model spaces.

\begin{example}\label{e:5}
 Consider the standard action $T^{n+1} \times S^{2n+1} \rightarrow S^{2n+1}$ given in complex
coordinates by $(e^{2\pi i \theta_0}, \hdots, e^{2\pi i \theta_{n}}) \cdot (z_{0}, \hdots, z_n) = (e^{2\pi i \theta_{0}} \cdot z_{0}, \hdots, e^{2\pi i \theta_{n}} \cdot z_{n})$. This also
induces a maximal effective $T^{n+1}$-action on all lens spaces $S^{2n+1}/\zzz_{q}$, as well as an
effective $T^{n} = T^{n+1}/\Delta (S^{1})$-action on $\ccc P^{n} = S^{2n+1}/ \Delta(S^{1})$. To get a maximal effective
action of $T^{n}$ on $S^{2n}$ simply suspend the above action $T^{n} \times S^{2n-1} \rightarrow S^{2n-1}$.

We finally point out that our Main Theorem in dimension 4 is not as general as the
one by Hsiang and Kleiner \cite{HK} (they assume only $S^{1}$-symmetry whereas we assume
$T^{2}$-symmetry). On the other hand our conclusion is stronger (by giving a diffeomorphism
classification rather than a topological classification). It may also be worth
mentioning that an analogous result for $SU(2)^{n}$-actions (where one would add only
$\hh P^n$ to the list) does not hold, since e.g. $SU(2)^3$ acts on the $12$-dimensional Flag
manifold $M^{12}$ \cite{W} of quaternion lines in planes on $\hh^3$ (cf. \cite{S1} and \cite{GS} though).
 \end{example}


\begin{thebibliography}{999}

\bibitem {AH}  C. Allday and S. Halperin, \emph{Lie group actions on spaces of finite rank} , Quart. J. Math. Oxford 29 (1978)
63--67.

\bibitem {AW}  S. Aloff and N.L. Wallach, \emph{An infinite family of distinct $7$-manifolds admitting positively curved
Riemannian structures}, Bull. Amer. Math. Sot. 81 (1975) 93--97.

\bibitem {BB} L. Berard Bergery, \emph{Les vari\'et\'es Riemanniennes homog\`enes simplement connexes de dimension impair
\`a courbure strictement positive}, J. Math. Pures Appl. 55 (1976) 47--68.


\bibitem{B} M. Berger, \emph{Les Vari\'et\'es Riemanniennes homog\`enes normales simplement connexes \`a courbure
strictement positive}, Ann. Scuola Norm. Sup. Pisa 15 (1961) 179--246.

\bibitem{BGP} Y. Burago, M. Gromov and G. Perelman, \emph{AlexandrovÕs spaces with curvatures bounded from below I},
Uspeki Mat. Nauk 47 (1992) 3--51.

\bibitem {CG} .I. Cheeger and D. Gromoll, \emph{On the structure of complete manifolds of non-negative curvature}, Ann.
of Math. 96 (1972) 413--443.

\bibitem {G} A. Gray, \emph{Pseudo-Riemannian almost product manifolds and submersions}, J. Math. Mech. 16 (1967)
715--737.
\bibitem {GG} D. Gromoll and K. Grove, \emph{The low-dimensional metric foliations of Euclidean spheres}, J. Differential
28 (1988) 143--156.

\bibitem {Gr} K. Grove, \emph{Critical point theory for distance functions}, in: Differential Geometry, Part 3, Proceedings
of Symposia in Pure Mathematics, Vol. 54 (American Mathematical Society, Providence, RI, 1993)
357--385.

\bibitem {GS} K. Grove and C. Searle, in preparation.

\bibitem {HK} W.-Y. Hsiang and B. Kleiner, \emph{On the topology of positively curved 4-manifolds with symmetry},
J. Differential Geom. 30 (1989) 615--621.
\bibitem {K} S. Kobayashi, \emph{Transformation Groups in Differential Geometry}, Ergebnisse der Mathematik und
Ihre Grenzgebiete, Band 70 (Springer, Berlin, 1972).

\bibitem {O'N} B. OÕNeill, \emph{The fundamental equations of a submersion}, Michigan Math. J. 13 (1966) 459--469.

\bibitem {P} G. Perelman, \emph{AlexandrovÕs spaces with curvatures bounded from below II}, Preprint.

\bibitem{S1} C. Searle, {\emph Manifolds of positive curvature with large symmetries}, Ph.D. Thesis, University of
Maryland at College Park, 1992.
\bibitem {S2} C. Searle, \emph{Cohomogeneity and positive curvature in low dimensions}, Math. Z., to appear.
\bibitem{W}  N.R. Wallach, \emph{Compact homogeneous manifolds with strictly positive curvature}, Ann. of Math. 96
(1972) 277--295.
\end{thebibliography}
\end{document}